\title{On the Embedding of $BV$ Space into Besov-Orlicz Space}
\author{Aleksander Pawlewicz, Micha\l{} Wojciechowski}
\date{2021\\ October}
\def\blfootnote{\gdef\@thefnmark{}\@footnotetext}
\theoremstyle{plain}
\newtheorem{theorem}{Theorem}
\newtheorem*{theorem*}{Theorem}
\newtheorem{lemma}[theorem]{Lemma}
\theoremstyle{remark}
\newtheorem{remark}{Remark}
\numberwithin{equation}{section}
\begin{document}

\maketitle

\begin{abstract}
We give a sufficient (and, in the case of a compact domain, a necessary) condition for the embedding of Sobolev space of functions with integrable gradient into Besov-Orlicz spaces to be bounded. The condition has  a form of a simple integral inequality involving Young and weight functions. We provide an example with Matuszewska-Orlicz indices of involved Orlicz norm equal to one. The main tool is the molecular decomposition of functions from a $BV$ space. 
\end{abstract}

\blfootnote{\textup{2020} \textit{Mathematics Subject Classification}: 46E30, 46E35, 30H25.}

\section*{Introduction}
The motivation to write this paper was the study of the classical Sobolev embedding theorems from the point of view of operator ideals. Factorizing the embedding $S:W^{1,1}(\mathbb{R}^d) \hookrightarrow L^p (\mathbb{R}^d)$ as
$A\circ B$ where $B:W^{1,1}(\mathbb{R}^d) \hookrightarrow X(\mathbb{R}^d)$ and 
$A:X(\mathbb{R}^d)\hookrightarrow L^p (\mathbb{R}^d)$, where $X(\mathbb{R}^d)$ is suitable chosen translationary invariant space of distributions is the main tool for this aim. The choice of the space $X$ is the key issue here - on one hand it should have an easy to handle norm and on the other  hand it should be as "close" to $W^{1,1}$ as possible. Then one can extract stronger operator ideal properties from the embedding $A:X(\mathbb{R}^d)\hookrightarrow L^p (\mathbb{R}^d)$ which is easier to study. Many classical Banach function spaces suitable for this role were studied. In the case of rearrangement invariant spaces, or Orlicz spaces the optimal results were obtained in \cite{Cia} and \cite{EdKePi}, respectively. The other known classical spaces $X$ which could be used in this role are the Besov spaces. 
Besov spaces were introduced to mathematics by Besov in \cite{Bes1} and \cite{Bes2} (see also \cite[page 34]{Tri}). Nowadays, there is an extensive literature devoted to these classical spaces, for example Peetre's book \cite{Pee} and several books written by Hans Triebel to mention only a few.
The boundedness of the embedding of Sobolev space into Besov space was proved in the 70s and 80s by Il’in (see book \cite[paragraph 18.12]{BeIlNi}) and in the anisotropic case by Kolyada \cite{Kol1}. For a brief description of these facts and some generalizations in the context of Lorentz spaces, we recommend Kolyada’s paper \cite{Kol2}.
Our starting point (especially concerning proofs) is the following vector valued version of Il'in's theorem.

\begin{theorem}[{\cite[Theorem 3.3, page 93]{PeWo}}]\label{Pelczynski}
Let $d=2, 3, ...$ and let $E$ be a Banach space. Then
$$W^{1,1}(\mathbb{R}^d,E)\hookrightarrow B_{p,1}^{\theta(p,d)}(\mathbb{R}^d,E),$$
where $\theta(p,d)=d(1/p+1/d-1)$ and $1<p<d/(d-1)$.
\end{theorem}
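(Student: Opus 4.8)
The plan is to start from the difference (modulus of continuity) description of the target space. Since $0<\theta<1$ throughout the range $1<p<d/(d-1)$ (indeed $\theta\to 1$ as $p\to 1^+$ and $\theta\to 0$ as $p\to d/(d-1)$), the vector-valued Besov norm admits the equivalent expression
$$\|f\|_{B^{\theta}_{p,1}(\mathbb{R}^d,E)}\;\sim\;\|f\|_{L^p}+\int_0^{\infty}t^{-\theta}\,\omega_p(f,t)\,\frac{dt}{t},\qquad \omega_p(f,t)=\sup_{|h|\le t}\|f(\cdot+h)-f\|_{L^p(\mathbb{R}^d,E)},$$
which holds verbatim for $E$-valued functions. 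This reduces the statement to two estimates: the lower-order bound $\|f\|_{L^p}\lesssim\|f\|_{W^{1,1}}$ and the homogeneous bound $\int_0^\infty t^{-\theta}\omega_p(f,t)\frac{dt}{t}\lesssim\|\nabla f\|_{L^1}$. The first follows from the vector-valued Gagliardo--Nirenberg--Sobolev inequality $W^{1,1}(\mathbb{R}^d,E)\hookrightarrow L^{d/(d-1)}(\mathbb{R}^d,E)$ together with $W^{1,1}\hookrightarrow L^1$ and interpolation, since $1\le p\le d/(d-1)$; the Gagliardo argument survives in the Banach-valued setting because it uses only the scalar function $x\mapsto\|f(x)\|_E$ and the one-dimensional fundamental theorem of calculus.

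For the homogeneous bound I would split the integral at $t=1$. On $t\ge 1$ one has $\omega_p(f,t)\le 2\|f\|_{L^p}$ and $\int_1^\infty t^{-\theta}\frac{dt}{t}<\infty$ because $\theta>0$, so this tail is absorbed into $\|f\|_{W^{1,1}}$. On $t\le 1$ I would combine the two elementary endpoint bounds, both valid for $E$-valued $f$: the Lipschitz estimate $\|f(\cdot+h)-f\|_{L^1}\le |h|\,\|\nabla f\|_{L^1}$ and the Sobolev estimate $\|f(\cdot+h)-f\|_{L^{d/(d-1)}}\le 2\|f\|_{L^{d/(d-1)}}\lesssim\|\nabla f\|_{L^1}$. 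Writing $1/p=(1-\alpha)+\alpha(d-1)/d$, that is $\alpha=d(1-1/p)\in(0,1)$, log-convexity of the $L^p$ norms gives $\omega_p(f,t)\lesssim t^{1-\alpha}\|\nabla f\|_{L^1}$. The difficulty is that $1-\alpha=1-d(1-1/p)=\theta$ \emph{exactly}, so this crude interpolation lands precisely on the borderline, $\omega_p(f,t)\lesssim t^\theta\|\nabla f\|_{L^1}$, and inserting it produces the logarithmically divergent $\|\nabla f\|_{L^1}\int_0^1 dt/t$. \textbf{This is the main obstacle.} It reflects the sharpness of the fine index $q=1$: the bound $t^\theta\|\nabla f\|_{L^1}$ charges the full gradient mass at every scale, whereas the theorem holds precisely because a fixed amount of gradient mass cannot be active at all scales at once.

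To overcome this one must replace the uniform per-scale bound by one with geometric decay away from a distinguished scale. In the scalar case the mechanism is transparent through the coarea formula: writing $\|\nabla f\|_{L^1}=\int_{\mathbb{R}}\mathrm{Per}(\{f>s\})\,ds$ reduces everything to indicators $\chi_\Omega$, for which the frequency-$2^{j}$ pieces live in a $2^{-j}$-neighbourhood of $\partial\Omega$ and obey the genuine bound $\|\Delta_j\chi_\Omega\|_{L^p}\lesssim (2^{-j}\mathrm{Per}(\Omega))^{1/p}$; tested against $2^{j\theta}$ this is a \emph{convergent geometric} series (since $\theta-1/p=(d-1)(1/p-1)<0$), not a uniform bound, so summation over scales costs only a constant. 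Since the coarea formula is unavailable for $E$-valued $f$, the decisive step is to substitute for it a \emph{molecular decomposition} of the gradient: one writes $f=\sum_Q m_Q$ over dyadic cubes $Q$, where each molecule $m_Q$ is adapted to the scale $\ell(Q)$, contributes to the difference integral with geometric decay away from that scale, and the masses satisfy $\sum_Q\|\nabla m_Q\|_{L^1}\lesssim\|\nabla f\|_{L^1}$. Summing the resulting geometric series over scales converts the divergent $\int_0^1 dt/t$ into a convergent one and produces a constant depending only on $d$ and $p$. Constructing this decomposition with simultaneous scale-localization and control of the total gradient mass --- the near-orthogonality of scales for a $BV$ gradient that replaces coarea in the Banach-valued setting --- is where essentially all the work lies; the remaining ingredients (interpolation, Bernstein-type inequalities, and summation of geometric series) are routine.
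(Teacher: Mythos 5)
Your reduction to the two endpoint bounds, the identification of $1-\alpha=\theta$ as the exact borderline where naive interpolation fails logarithmically, and the diagnosis that the cure is a molecular decomposition with per-molecule scale localization and $\ell^1$ control of gradient masses are all correct, and this is indeed the strategy of the source you would need (the paper itself does not prove this statement --- it quotes it from Pe\l{}czy\'nski--Wojciechowski --- but its Sections 2--3 run exactly this argument for the scalar Orlicz analogue: decompose, estimate each molecule on either side of a distinguished scale $s_m=(2\lVert f_m\rVert_\infty/\lVert\nabla f_m\rVert_M)^{1/(d-1)}$, sum). However, your proposal stops precisely at the step that constitutes the entire proof. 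You assert the existence of a decomposition $f=\sum_Q m_Q$ ``over dyadic cubes, adapted to the scale $\ell(Q)$'' with $\sum_Q\lVert\nabla m_Q\rVert_{L^1}\lesssim\lVert\nabla f\rVert_{L^1}$, but you give no construction and no proof of the per-molecule estimate, and you yourself concede that ``essentially all the work lies'' there. A proof that names its key lemma without proving it is not a proof; this is a genuine gap, not a routine omission.

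Moreover, the shape of the decomposition you gesture at is not the right one. The decomposition actually used (Theorem \ref{molecular} here, and its ancestor in \cite{PeWo}) is by \emph{level sets}, not dyadic cubes: $f_n$ is the truncation of $f$ between heights $a_n$ and $a_{n+1}$ chosen so that the superlevel sets halve in measure, and the crucial per-molecule inequality is the \emph{multiplicative} one $\lVert f_n\rVert_\infty^{1/d}\lVert f_n\rVert_1^{(d-1)/d}\le\alpha\lVert\nabla f_n\rVert_M$ (via coarea plus isoperimetry), with \emph{exact} additivity $\sum_n\lVert\nabla f_n\rVert_M=\lVert\nabla f\rVert_M$ because the level ranges are disjoint; the ``distinguished scale'' of a molecule is then $\lVert f_n\rVert_\infty/\lVert\nabla f_n\rVert_M$, a ratio of norms, not a spatial side length. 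For a dyadic-cube decomposition you would instead have to prove near-additivity of the gradient masses across overlapping cutoffs, which is a different (and harder to even formulate) claim. Finally, for the Banach-valued statement you are actually asked to prove, the level-set construction is unavailable since $f$ takes values in $E$, and you offer no substitute; this is exactly the nontrivial content of \cite[Theorem 2.1]{PeWo}, so the vector-valued case of your argument is entirely open as written.
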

One has to stress that the Besov norm is much easier to handle than the original Sobolev one  - it could be expressed  both directly using the modulus of continuity or with help of its Fourier transform.

In this paper we will consider the general form of the Besov-Orlicz spaces studied in \cite{PiSi}. We find the Besov - Orlicz spaces which are "closer" to $W^{1,1}$ than any of the classical Besov spaces. Moreover, we give the necessary and sufficient conditions on functions
$\Phi$ and $\Psi$ for the boundedness of the embedding $W^{1,1}(\Omega)\hookrightarrow B^\Psi_{\Phi,1}(\Omega)$ in the case of compact domains $\Omega$.

In fact we are interested in $BV$ spaces rather than $W^{1,1}$. When the target space of the embedding is reflexive, the embedding of the Sobolev space can be immediately extended to the space of functions of bounded variation, since the latter one is a subspace of the bidual of $W^{1,1}$. However the Orlicz - Besov spaces $B^\Psi_{\Phi,1}(\Omega)$ appearing in our characterization may not be reflexive. Therefore to
perform our proof we need to deal directly with the space $BV$. In particular we adjust the "molecular decomposition" from \cite{PeWo} to the functions of bounded variation. Here the main obstacle is that smooth functions are not dense in the $BV$ space with respect to the $BV$ norm.

The main result of the paper is the following

\begin{theorem}\label{Kolyada_zwarte}
Let $B_{\Phi,1}^\Psi(\Omega)$ be a Besov-Orlicz space for some continuous function $\Psi$, a Young function $\Phi$ such that $\lim_{x\rightarrow\infty}\frac{\Phi(x)}{x^{d/(d-1)}}=0$ and a compact subset $\Omega$ of $\mathbb{R}^d$. Then $BV(\Omega)$ space can be continuously embedded into the space $B_{\Phi,1}^\Psi(\Omega)$ if and only if there exist a positive constant $D$ such that for every $s>0$ we have
\begin{eqnarray}
\label{Kolyada_nier_zw}
\frac{s^{d-1}}{\Phi^{-1}(s^d)}\int_0^s\frac{\Psi(1/t)}{t}dt + \int_s^\infty\frac{\Psi(1/t)s^{d-1}}{\Phi^{-1}(ts^{d-1})t} dt <D.
\end{eqnarray}
\end{theorem}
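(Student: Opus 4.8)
My plan is to prove both implications through the \emph{molecular decomposition} of $BV$ functions, reducing the embedding to a single scale-by-scale estimate for the Besov--Orlicz norm of one molecule, which will turn out to be exactly inequality \eqref{Kolyada_nier_zw}.

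\textbf{The building block.} First I would fix, for each scale $s>0$, a normalized molecule $m_s$ --- the characteristic function of a cube (or ball) of side $s$, so that $\|m_s\|_{BV}\approx s^{d-1}$ for small $s$. The heart of the argument is to compute $\|m_s\|_{B^\Psi_{\Phi,1}}$. Since the Besov--Orlicz seminorm is built from the $L^\Phi$-modulus of the differences $m_s(\cdot+h)-m_s$ weighted by $\Psi$, everything reduces to the $L^\Phi$-norm of the indicator of the symmetric difference $Q_s\triangle(Q_s+h)$, whose Lebesgue measure behaves like $s^{d-1}\min(|h|,s)$. The Orlicz norm of such an indicator is governed by $\Phi^{-1}$ evaluated at this measure, so that splitting the defining integral at the scale $|h|=s$ produces exactly the two regimes in \eqref{Kolyada_nier_zw}: the sub-scale range $0<t<s$, where the boundary layer of $Q_s$ has measure $\sim t\,s^{d-1}$, and the super-scale range $t>s$, where the two copies of $Q_s$ are essentially disjoint and the measure saturates at $s^d$. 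After normalizing by $\|m_s\|_{BV}\approx s^{d-1}$ this yields $\|m_s\|_{B^\Psi_{\Phi,1}}\approx$ the left-hand side of \eqref{Kolyada_nier_zw}, and the hypothesis $\Phi(x)/x^{d/(d-1)}\to 0$ is what guarantees this quantity is finite: it places $\Phi$ strictly below the critical Sobolev exponent, so that $BV\hookrightarrow L^\Phi$ with room to spare.

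\textbf{Sufficiency.} Assuming \eqref{Kolyada_nier_zw}, I would invoke the molecular decomposition adapted from \cite{PeWo}: any $f\in BV(\Omega)$ can be written as $f=\sum_k c_k\,m_{s_k}$ (translates and dilates of the molecules above, one family per dyadic scale), with $\sum_k|c_k|\lesssim\|f\|_{BV}$. The construction proceeds through the coarea formula, decomposing $f$ into its superlevel sets --- sets of finite perimeter --- and approximating each by Whitney-type cubes, so that the $\ell^1$ mass of the coefficients is controlled by $\int P(\{f>\lambda\})\,d\lambda=|Df|(\Omega)$. Because the fine index of the space is $1$, the seminorm is subadditive, hence $\|f\|_{B^\Psi_{\Phi,1}}\le\sum_k|c_k|\,\|m_{s_k}\|_{B^\Psi_{\Phi,1}}$, and the uniform bound from the previous step gives $\|f\|_{B^\Psi_{\Phi,1}}\lesssim D\,\|f\|_{BV}$.

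\textbf{Necessity and the main obstacle.} For the converse, with $\Omega$ compact I would simply test the assumed embedding on the molecules $m_s$ themselves, which lie in $BV(\Omega)$ once the cube fits inside $\Omega$: the inequality $\|m_s\|_{B^\Psi_{\Phi,1}}\le C\,\|m_s\|_{BV}$ is, by the computation of the building-block step, precisely \eqref{Kolyada_nier_zw} with $D=C$ (extended to the full range of $s$ by the monotonicity of the two integrals in $s$). I expect the genuine difficulty to lie not here but in the molecular decomposition: since smooth functions are \emph{not} dense in $BV$ for the $BV$ norm, one cannot obtain the decomposition by mollification, and the singular part of $Df$ must be handled directly through the geometric measure theory of sets of finite perimeter. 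Making the decomposition quantitative --- keeping $\sum_k|c_k|\lesssim\|f\|_{BV}$ while ensuring that the molecules at scale $s$ are produced in the right number and at the right location --- is the crux on which the whole equivalence rests.
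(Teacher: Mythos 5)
Your necessity argument is essentially the one in the paper: test the assumed embedding on indicators of balls, bound the Orlicz norm of the indicator of the symmetric difference $\mathcal{B}(0,r)\triangle\mathcal{B}(h,r)$ from below via $\Phi^{-1}$ of its measure $\gtrsim r^{d-1}\min(|h|,r)$, and read off the left-hand side of \eqref{Kolyada_nier_zw} at $s\sim 1/r$. That half is sound. The problem is the sufficiency half, and it sits exactly where you yourself locate ``the crux'': the claimed decomposition $f=\sum_k c_k m_{s_k}$ into indicators of cubes with $\sum_k |c_k|\,\|m_{s_k}\|_{BV}\lesssim\|f\|_{BV}$ does not follow from coarea plus Whitney cubes --- it fails already for $f=\chi_{\mathcal{B}(0,1)}$. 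The Whitney cubes of generation $k$ inside the ball have side $\sim 2^{-k}$, there are $\sim 2^{k(d-1)}$ of them near the boundary, and each has perimeter $\sim 2^{-k(d-1)}$, so every generation contributes total perimeter $\sim 1$ and $\sum_k |c_k|\,s_k^{d-1}$ diverges while $P(\mathcal{B}(0,1))$ is finite. Consequently the subadditivity step $\|f\|_{B^\Psi_{\Phi,1}}\le\sum_k|c_k|\,\|m_{s_k}\|_{B^\Psi_{\Phi,1}}$ cannot be closed, and I know of no atomic decomposition of $BV$ into cube indicators with $\ell^1$-controlled $BV$ mass that would repair this.

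The paper's molecules are not cubes and carry no prescribed geometry: they are the truncations $f_n=\min(f,a_{n+1})-\min(f,a_n)$ of $f$ between levels $a_n$ chosen so that each step halves the measure of the superlevel set. For these the coarea formula gives the \emph{exact} additivity $\|f\|_1=\sum_n\|f_n\|_1$ and $\|\nabla f\|_M=\sum_n\|\nabla f_n\|_M$, and the isoperimetric inequality yields $\|f_n\|_\infty^{1/d}\|f_n\|_1^{(d-1)/d}\le\alpha\|\nabla f_n\|_M$. The per-molecule Besov--Orlicz estimate then requires no geometric information at all: the elementary convexity bound $\|g\|_\Phi\le 2\|g\|_\infty/\Phi^{-1}\bigl(2\|g\|_\infty/\|g\|_1\bigr)$ applied to $g=f_n(\cdot+h)-f_n$, together with $\omega_1(f_n,t)\le t\|\nabla f_n\|_M$, controls $\omega_\Phi(f_n,1/t)$ in the two regimes, with the splitting scale $s_n=\bigl(2\|f_n\|_\infty/\|\nabla f_n\|_M\bigr)^{1/(d-1)}$ dictated by the molecule itself; summing over $n$ uses only the additivity above. (Also, a small correction: the hypothesis $\Phi(x)/x^{d/(d-1)}\to 0$ is not what makes the left-hand side of \eqref{Kolyada_nier_zw} finite --- that is the assumption itself; it is used to prove the zero-order bound $\|f\|_\Phi\lesssim\|f\|_1+\|\nabla f\|_M$ via the embedding $BV\hookrightarrow L^{d/(d-1)}$.) If you replace your cube atoms by these level-set truncations, the rest of your outline goes through.
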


When a domain of considered functions is the whole space, $\Omega=\mathbb{R}^d$, then we do not have the equivalence of conditions.
\begin{theorem}
\label{Kolyada}
Let $B_{\Phi,1}^\Psi(\mathbb{R}^d)$ be a Besov-Orlicz space for some continuous function $\Psi$, and a Young function $\Phi$ such that $\lim_{x\rightarrow\infty}\frac{\Phi(x)}{x^{d/(d-1)}}=0$. If there exists a positive constant $D$ such that for every $s>0$ we have
\begin{eqnarray}
\label{Kolyada_nier}
\frac{s^{d-1}}{\Phi^{-1}(s^d)}\int_0^s\frac{\Psi(1/t)}{t}dt + \int_s^\infty\frac{\Psi(1/t)s^{d-1}}{\Phi^{-1}(ts^{d-1})t} dt <D,
\end{eqnarray}
then there exists a positive constant $\mathcal{C}$ such that
\begin{eqnarray}
\label{teza}
\lVert f\rVert_{B_{\Phi,1}^\Psi}
\leq
\mathcal{C}\lVert f\rVert_{BV},
\end{eqnarray}
for every $BV$ function $f$.
\end{theorem}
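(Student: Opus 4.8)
The plan is to reduce the norm estimate (\ref{teza}) to a single scale-localized building block via a molecular decomposition of $BV$, and then to recognise the integral condition (\ref{Kolyada_nier}) as \emph{exactly} the Besov--Orlicz estimate for that block. The molecular decomposition is the analogue, adapted to $BV$, of the device behind Theorem \ref{Pelczynski}: one writes
\[
f=\sum_{k}\lambda_k\,a_k,\qquad \sum_k|\lambda_k|\,\lVert a_k\rVert_{BV}\le C\,\lVert f\rVert_{BV},
\]
where each $a_k$ is a molecule modelled on the indicator of a ball $B_k=B(x_k,r_k)$: an $L^\infty$-bounded function supported near $B_k$, behaving like $\chi_{B_k}$, with $|Da_k|(\mathbb{R}^d)\simeq\mathrm{Per}(B_k)\simeq r_k^{\,d-1}$. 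I would obtain such a decomposition through the coarea formula, representing $f$ as a superposition of indicators of its level sets $\{f>\lambda\}$ (with $\int\mathrm{Per}(\{f>\lambda\})\,d\lambda=|Df|(\mathbb{R}^d)$) and then splitting each set of finite perimeter into ball-like molecules by a Whitney/Calder\'on--Zygmund covering, keeping the total perimeter comparable to $\mathrm{Per}(\{f>\lambda\})$.

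The heart of the proof is the estimate for one molecule, and this is where (\ref{Kolyada_nier}) enters. For the model case $a=\chi_{B_r}$ the symmetric difference $B_r\triangle(B_r+h)$ has Lebesgue measure $\simeq r^{\,d-1}|h|$ when $|h|\le r$ and $\simeq r^{\,d}$ when $|h|\ge r$; combined with the elementary identity $\lVert\chi_A\rVert_{L^\Phi}=1/\Phi^{-1}(1/|A|)$ this gives an $L^\Phi$-modulus of continuity equal to $1/\Phi^{-1}\!\big(1/(r^{\,d-1}\tau)\big)$ for $\tau\le r$ and saturating to $1/\Phi^{-1}(1/r^{\,d})$ for $\tau\ge r$. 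Inserting these two regimes into the seminorm defining $\lVert\cdot\rVert_{B_{\Phi,1}^\Psi}$ and passing to the reciprocal scales $s\leftrightarrow 1/r$, $t\leftrightarrow1/\tau$ turns $\mathrm{Per}(B_r)^{-1}\,[\chi_{B_r}]_{B_{\Phi,1}^\Psi}$ into precisely the left-hand side of (\ref{Kolyada_nier}), the saturated regime producing the first summand and the non-saturated regime the second. Hence (\ref{Kolyada_nier}) yields $[\chi_{B_r}]_{B_{\Phi,1}^\Psi}\le D\,r^{\,d-1}$ uniformly in $r$. The $L^\Phi$-part of the norm is controlled separately by $\lVert\chi_{B_r}\rVert_{BV}$: from $\lim_{x\to\infty}\Phi(x)/x^{d/(d-1)}=0$ one gets $\Phi^{-1}(y)\gtrsim y^{(d-1)/d}$ for large $y$, so $\lVert\chi_{B_r}\rVert_{L^\Phi}=1/\Phi^{-1}(1/r^{\,d})\lesssim r^{\,d-1}$ at small scales (the delicate regime), while at large scales it is dominated by the volume term of the $BV$ norm. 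Altogether $\lVert a_k\rVert_{B_{\Phi,1}^\Psi}\lesssim(1+D)\,\lVert a_k\rVert_{BV}$.

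Summation then finishes the argument: by the triangle inequality (together with its integral form when one keeps the coarea superposition),
\[
\lVert f\rVert_{B_{\Phi,1}^\Psi}\le\sum_k|\lambda_k|\,\lVert a_k\rVert_{B_{\Phi,1}^\Psi}\lesssim(1+D)\sum_k|\lambda_k|\,\lVert a_k\rVert_{BV}\lesssim(1+D)\,\lVert f\rVert_{BV},
\]
which is (\ref{teza}) with $\mathcal{C}\simeq 1+D$.

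I expect the construction of the molecular decomposition to be the main obstacle, exactly as flagged in the introduction. Since smooth functions are \emph{not} dense in $BV$ for the $BV$-norm, the decomposition of \cite{PeWo} cannot be transported by approximation; it has to be built directly from the distributional gradient measure, and one must check that the geometric splitting of the level sets into ball-like molecules preserves the bound $\sum_k|\lambda_k|\,\lVert a_k\rVert_{BV}\lesssim\lVert f\rVert_{BV}$ (in particular that the total perimeter of the covering stays comparable to $\mathcal{H}^{d-1}(\partial^*\{f>\lambda\})$). Reducing everything to balls is what makes the single-molecule estimate coincide with the one-parameter condition (\ref{Kolyada_nier}); if instead one estimated indicators of arbitrary sets of finite perimeter directly, one would additionally have to show that, per unit perimeter, the ball maximises the Besov--Orlicz seminorm, an isoperimetric-type step I would prefer to bypass via the decomposition.
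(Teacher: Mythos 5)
Your single-molecule computation for $\chi_{B_r}$ is sound and does match the structure of \eqref{Kolyada_nier} (it is essentially the calculation the paper runs in reverse to prove \emph{necessity} in the compact case). The genuine gap is the decomposition you build everything on: a representation $f=\sum_k\lambda_k a_k$ with each $a_k$ modelled on a ball and $\sum_k|\lambda_k|\lVert a_k\rVert_{BV}\lesssim\lVert f\rVert_{BV}$ is not established, and the construction you suggest (coarea superposition of level sets followed by a Whitney/Calder\'on--Zygmund splitting into ball-like pieces) fails to control total perimeter. Already for a single ball $B_R$ the dyadic Whitney covering has, at each scale $2^{-j}R$, about $2^{j(d-1)}$ cubes of perimeter $\simeq(2^{-j}R)^{d-1}$, so the total perimeter contributes $\simeq R^{d-1}$ \emph{per scale} and the sum over $j$ diverges. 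Worse, an exact norm-convergent decomposition into ball indicators is impossible in general: for $E=[0,1]^2$ the gradient $\nabla\chi_E$ is carried by four segments, while the gradient of any countable combination of ball indicators is carried by countably many spheres, which meet those segments in an $\mathcal{H}^{1}$-null set; hence the total-variation distance from $\nabla\chi_E$ to any such combination is at least $\lVert\nabla\chi_E\rVert_M$. If you instead allow molecules that only ``behave like'' balls, you lose the exact two-regime modulus computation that made \eqref{Kolyada_nier} appear verbatim, and you are back to needing an estimate for general building blocks.

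The paper resolves exactly this point differently, and more cheaply than you anticipate. Its molecules (Theorem \ref{molecular}) are not geometric at all: they are horizontal layers $f_n=\min(\,(f-a_n)_+,\,a_{n+1}-a_n)$ of the function between successive level values chosen so that $|\{f>a_{n+1}\}|\le\frac12|\{f>a_n\}|$; additivity \eqref{r2} of both $\lVert\cdot\rVert_1$ and $\lVert\nabla\cdot\rVert_M$ along this decomposition is immediate from the layer-cake and coarea formulas. The only geometric input is the standard isoperimetric inequality applied to level sets, which yields the Gagliardo--Nirenberg-type bound \eqref{r3}, $\lVert f_n\rVert_\infty^{1/d}\lVert f_n\rVert_1^{(d-1)/d}\le\alpha\lVert\nabla f_n\rVert_M$ --- so the ``isoperimetric-type step'' you hoped to bypass is in fact the easy part. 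The two regimes of the $L_\Phi$-modulus are then obtained for an \emph{arbitrary} such molecule from the interpolation inequality $\lVert g\rVert_\Phi\le 2\lVert g\rVert_\infty/\Phi^{-1}(2\lVert g\rVert_\infty/\lVert g\rVert_1)$ combined with $\omega_1(f_n,t)\le t\lVert\nabla f_n\rVert_M$ (Lemma \ref{omega1}) for large $t$, and with \eqref{r3} for small $t$; splitting the $\Psi$-integral at $s_n=(2\lVert f_n\rVert_\infty/\lVert\nabla f_n\rVert_M)^{1/(d-1)}$ turns \eqref{Kolyada_nier} into the bound $\int_0^\infty\Psi(t)\omega_\Phi(f_n,t)\,dt/t\lesssim D\lVert\nabla f_n\rVert_M$. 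Finally, the paper proves the compact case first and passes to $\mathbb{R}^d$ by truncating $f$ at heights $1/m$ and using monotone convergence. To repair your argument you would either have to prove the ball-molecule decomposition (which I do not believe holds with perimeter control), or replace the ball model by the layer decomposition above --- at which point you have reconstructed the paper's proof.
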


Similarly to the concept of paper \cite{PeWo}, we construct a molecular decomposition of the functions from $BV$ space. After that, using this decomposition and some facts from the theory of Orlicz spaces, we prove Theorem \ref{Kolyada_zwarte} and Theorem \ref{Kolyada}.

\bigskip




Let us write a few words about the organization of this paper. In Section 1, we define notions considered further and present basic relations between them. Section 2 is devoted to the molecular decomposition of $BV$ functions. In Section 3, we prove the main theorem of the paper in a case of functions defined on a fixed compact set (Theorem \ref{Kolyada_zwarte}). In fact, in this case we have an equivalence of conditions \eqref{Kolyada_nier} and \eqref{teza}. In Section 4, we prove Theorem \ref{Kolyada}.

\section{Notation and basic facts}
Let us introduce notation. The leter $d$ will always denote a positive integer. As usual, the $L^p$ norm of a (measurable) function $f$ will be denoted by $\lVert f\rVert_p$.
Let $\mu_1, ..., \mu_d$ be signed Radon measures on $\mathbb{R}^d$. A measure $\mu=\left(\mu_1, ..., \mu_d\right)$ is called a weak gradient of the function $f:\mathbb{R}^d\rightarrow\mathbb{R}$ if
$$\int_{\mathbb{R}^d}f(x)\mbox{div}\varphi (x)\,dx=
\int_{\mathbb{R}^d}f(x)\sum_{n=1}^d\frac{\partial\varphi_n}{\partial x_n}(x)\,dx=
-\int_{\mathbb{R}^d}\varphi (x)\cdot d\mu(x),$$
for every $C_0^\infty$ function $\varphi:\mathbb{R}^d\rightarrow\mathbb{R}^d$. The symbol $\nabla f$ will denote the measure $\mu$ and its total variation will be defined as
$$\lVert\nabla f\rVert_M=\sup\left\{\int_{\mathbb{R}^d}f\mbox{ div}\varphi\,dx: \varphi=\left(\varphi_1, ..., \varphi_d\right)\in C_0^\infty, |\varphi(x)|\leq 1\mbox{ for } x\in\mathbb{R}^d\right\}.$$

Sobolev space $W^{1,1}(\mathbb{R}^d)$ is the Banach space of (equivalence classes of) integrable functions $f:\mathbb{R}^d\rightarrow\mathbb{R}$ which have a weak gradient that is absolutely continuous with respect to the Lebesgue measure.
This space is equipped with the norm
$$
\lVert f\rVert_{1,1}
=
\lVert f\rVert_1+\lVert\nabla f\rVert_M 
=
\lVert f\rVert_1+\lVert\tilde{\nabla}f\rVert_1,
$$
where the symbol $\tilde{\nabla}f$ denote an ordinary gradient of the function $f$ which exists almost everywhere in this situation.

Analogously, $BV(\mathbb{R}^d)$ (space of functions of bounded variation) is the Banach space of (equivalence classes of) integrable functions $f:\mathbb{R}^d\rightarrow\mathbb{R}$ which have a weak gradient. The norm on this space is defined as follows
$$
\lVert f\rVert_{BV}
=
\lVert f\rVert_1+\lVert\nabla f\rVert_M.
$$

For more information about Sobolev and $BV$ spaces we recommend books \cite{EvGa} and \cite{Zie}.

\begin{remark}\label{uwaga1}
We have embeddings:
$$W^{1,1}(\mathbb{R}^d)\hookrightarrow BV(\mathbb{R}^d)\hookrightarrow L^\frac{d}{d-1}(\mathbb{R}^d).$$
\end{remark}

Remark \ref{uwaga1} is a consequence of the following theorem.
\begin{theorem}[Sobolev embedding,  {\cite[Theorem 5.10, page 216]{EvGa}}]\label{wlozenie}
For every $d=1, 2, ...$ there exists a positive constant $C$ such that for every $BV(\mathbb{R}^d)$ function $f:\mathbb{R}^d\rightarrow\mathbb{R}$ we have
$$\lVert f\rVert_\frac{d}{d-1}
\leq
C\lVert\nabla f\rVert_M.$$
\end{theorem}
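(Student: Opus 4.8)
The plan is to reduce the statement to the classical Gagliardo--Nirenberg--Sobolev inequality for smooth, compactly supported functions, and then to transfer it to $BV$ by mollification. First I would treat the case $f\in C_0^\infty(\mathbb{R}^d)$. For each coordinate direction $i$ one has the elementary pointwise bound
$$|f(x)|\leq\int_{-\infty}^\infty\left|\frac{\partial f}{\partial x_i}(x)\right|\,dx_i,$$
so that, taking the product over $i$,
$$|f(x)|^{d/(d-1)}\leq\prod_{i=1}^d\left(\int_{-\infty}^\infty\left|\frac{\partial f}{\partial x_i}\right|\,dx_i\right)^{1/(d-1)}.$$
Integrating successively in $x_1,\dots,x_d$ and applying the generalized Hölder inequality at each step (Gagliardo's lemma), the product structure collapses and one obtains
$$\int_{\mathbb{R}^d}|f|^{d/(d-1)}\,dx\leq\prod_{i=1}^d\left(\int_{\mathbb{R}^d}\left|\frac{\partial f}{\partial x_i}\right|\,dx\right)^{1/(d-1)}\leq\left(\int_{\mathbb{R}^d}|\tilde{\nabla}f|\,dx\right)^{d/(d-1)},$$
the final inequality by the arithmetic--geometric mean inequality. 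Raising both sides to the power $(d-1)/d$ yields $\lVert f\rVert_{d/(d-1)}\leq C\lVert\tilde{\nabla}f\rVert_1$ for smooth $f$, with $C$ depending only on $d$.

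The second step is the passage from $C_0^\infty$ to $BV$. Given $f\in BV(\mathbb{R}^d)$, I would mollify, setting $f_\varepsilon=f*\rho_\varepsilon$ for a standard nonnegative mollifier $\rho_\varepsilon$ with total mass one. Then $f_\varepsilon\in C^\infty$, $f_\varepsilon\to f$ in $L^1$, and, since the distributional gradient satisfies $\tilde{\nabla}f_\varepsilon=(\nabla f)*\rho_\varepsilon$ (the convolution of the vector measure $\nabla f$ with $\rho_\varepsilon$), one has the key estimate $\lVert\tilde{\nabla}f_\varepsilon\rVert_1\leq\lVert\nabla f\rVert_M$: convolution against a probability density never increases the total variation. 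Applying the smooth inequality to each $f_\varepsilon$ gives $\lVert f_\varepsilon\rVert_{d/(d-1)}\leq C\lVert\nabla f\rVert_M$ uniformly in $\varepsilon$.

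Finally I would pass to the limit. Extracting a subsequence along which $f_\varepsilon\to f$ almost everywhere and invoking Fatou's lemma gives
$$\lVert f\rVert_{d/(d-1)}\leq\liminf_{\varepsilon\to 0}\lVert f_\varepsilon\rVert_{d/(d-1)}\leq C\lVert\nabla f\rVert_M,$$
which is the assertion. The step I expect to be the main obstacle is the combinatorial heart of the first part --- the iterated integration combined with the generalized Hölder inequality, where one must verify at each stage that the factor being integrated out is precisely an $L^{d-1}$ function of the remaining variables, so that Hölder applies with $d-1$ exponents each equal to $1/(d-1)$. By contrast, the $BV$ extension is routine once the total-variation-decreasing property of mollification is in hand; note in particular that it requires only $L^1$ convergence together with the uniform gradient bound, and not density of smooth functions in the $BV$ norm, thereby sidestepping the density obstruction emphasized in the introduction.
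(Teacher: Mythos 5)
The paper offers no proof of this statement at all---it is quoted directly from Evans--Gariepy---and your argument is precisely the standard proof given in that reference: the Gagliardo--Nirenberg--Sobolev inequality for smooth compactly supported functions via the pointwise bound and iterated generalized H\"older, followed by mollification, the total-variation bound $\lVert\tilde{\nabla}(f*\rho_\varepsilon)\rVert_1\leq\lVert\nabla f\rVert_M$, and Fatou's lemma; so the proposal is correct and matches the source's approach. The one point worth making explicit is that $f*\rho_\varepsilon$ need not be compactly supported even though it is smooth, so you must either insert a cutoff or note that the bound $|f(x)|\leq\int_{-\infty}^{\infty}\left|\partial f/\partial x_i\right|\,dx_i$ still holds for smooth $L^1$ functions with integrable gradient (the limits along almost every line exist and must vanish by integrability); this is routine.
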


The last space which will be important for our consideration is the Besov-Orlicz space. Let us introduce this concept.

By a \textit{Young function} we mean a continuous, strictly increasing, convex function $\Phi:[0,\infty)\rightarrow \mathbb{R}$ such that $\Phi(0)=0$, $\lim_{t\rightarrow\infty}t/\Phi(t)=0$ and $\lim_{t\rightarrow 0}\Phi(t)/t=0$.

In fact, we will mainly be interested in Young functions $\Phi$ which dominate near zero every $x^p$ function, $p>1$, meaning that 
$$\lim_{t\rightarrow 0}\frac{\Phi(t)}{t}=\lim_{t\rightarrow 0}\frac{t^p}{\Phi(t)}=0,$$
for every $p>1$.

For the Young function $\Phi$  and for some continuous non-negative function $\Psi$ we define the generalized Besov-Orlicz space:
$$B_{\Phi,1}^\Psi=\left\{f\in L_\Phi(\mathbb{R}^d):\int_0^\infty \Psi(t)\omega_\Phi(f,t)\,\frac{dt}{t}<\infty\right\},$$
where $L_\Phi(\mathbb{R}^d)$ is an ordinary Orlicz space of integrable functions on $\mathbb{R}^d$ with the Luxemburg norm
$$\lVert f\rVert_\Phi=\inf\left\{\lambda>0: \int_{\mathbb{R}^d}\Phi\left(\frac{|f(x)|}{\lambda}\right)\,dx\leq 1\right\}$$
and
\begin{eqnarray}\label{modulus}
\omega_\Phi(f,t)=\sup_{|h|\leq t}\lVert f(\cdot+h)-f(\cdot)\rVert_\Phi
\end{eqnarray}
being the integral modulus of continuity.
We define the norm on the space $B_{\Phi,1}^\Psi$ by
$$\lVert f\rVert_{B_{\Phi,1}^\Psi}=\lVert f\rVert_\Phi+\int_0^\infty \Psi(t)\omega_\Phi(f,t)\,\frac{dt}{t}.$$

\begin{remark}\label{uwaga2}
Besov-Orlicz spaces considered above are, in fact, a generalization of Besov spaces with the norm:
$$\lVert f\rVert_{B_{p,1}^\theta}=\lVert f\rVert_p+\int_0^\infty t^{-\theta}\omega_p(f,t)\,\frac{dt}{t}.$$
Thus, as one can see, $\Psi(t)=t^{-\theta}$ and $\Phi(t)=t^p$ in this case.
\end{remark}

We refer the Reader to the Introduction of papers \cite{PeWo} and \cite{Kol2} and the literature mentioned there for more information about the history of connections between Sobolev and Besov spaces.

The symbol $\chi_A$ will denote the characteristic function of the set $A\subseteq\mathbb{R}^d$, $d=1, 2, ...$, that is the function such that $\chi_A(x)=1$ if $x\in A$ and $\chi_A(x)=0$ if $x\not\in A$.

\section{Molecular decomposition of $BV$ functions}

In this section, we prove the molecular decomposition of $BV$ functions. An analogous decomposition for functions from Sobolev space was proved in the paper \cite[Theorem 2.1, pages 71 - 72]{PeWo}. The authors of that paper highly relied on the density of smooth functions in Sobolev space. Although we use the same key ingredients, our more general proof is slightly different and does not use the density of smooth functions.
\begin{theorem}\label{molecular}
For $d=1, 2, ...$ let $f$ be a $BV(\mathbb{R}^d)$ function with compact support. Then there exists a sequence $(f_n)_{n=0}^\infty$ of $BV(\mathbb{R}^d)$ functions (called molecules) and a constant $\alpha>0$ (which depends only on the dimention $d$) such that
\begin{equation}\label{r1}
f(x)=\sum_{n=0}^\infty f_n(x) \mbox{ for a.e. } x\in\mathbb{R}^d,
\end{equation}
\begin{equation}\label{r2}
\lVert f\rVert_1=\sum_{n=0}^\infty \lVert f_n\rVert_1 \mbox{   and   } \lVert\nabla f\rVert_M=\sum_{n=0}^\infty \lVert\nabla f_n\rVert_M,
\end{equation}
and
\begin{equation}\label{r3}
\lVert f_n\rVert_\infty^{1/d}\lVert f_n\rVert_1^{(d-1)/d}
\leq
\alpha\lVert\nabla f_n\rVert_M \mbox{ for } n=0, 1, ...\, .
\end{equation}
\end{theorem}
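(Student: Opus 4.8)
The plan is to build the molecules by decomposing $f$ according to its level sets, exploiting the coarea formula for $BV$ functions. The key structural fact I would use is the coarea formula, which states that for $f \in BV(\mathbb{R}^d)$ one has $\lVert \nabla f \rVert_M = \int_{-\infty}^{\infty} \lVert \nabla \chi_{\{f>\lambda\}} \rVert_M \, d\lambda = \int_{-\infty}^\infty P(\{f>\lambda\}) \, d\lambda$, where $P$ denotes the perimeter, together with the layer-cake representation $f = \int_0^\infty \chi_{\{f>\lambda\}} \, d\lambda - \int_{-\infty}^0 \chi_{\{f<\lambda\}} \, d\lambda$. This replaces the role that smooth approximation played in \cite{PeWo}: instead of approximating $f$ by smooth functions and cutting in physical space, I would cut $f$ in the \emph{range}, building molecules out of characteristic functions of super-level sets. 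This is exactly the device that lets one avoid the density of smooth functions in $BV$, which the text flags as the main obstacle.

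First I would reduce to $f \geq 0$ by splitting into positive and negative parts (each is $BV$ with controlled norm, and \eqref{r2} is additive over a disjoint-support decomposition of this kind). Then, working with the super-level sets $E_\lambda = \{f > \lambda\}$, I would dyadically group the range: for each $n$ let $f_n$ be the ``slab'' of $f$ lying between two consecutive dyadic heights, so that $f_n(x) = \int_{I_n} \chi_{E_\lambda}(x) \, d\lambda$ for a dyadic interval $I_n$ of heights. The layer-cake formula gives \eqref{r1} pointwise a.e., and because the slabs partition the range, the $L^1$ norms add exactly (Fubini/Tonelli plus $\lVert \chi_{E_\lambda}\rVert_1 = |E_\lambda|$) and the coarea formula makes the total variations add exactly, giving both identities in \eqref{r2}. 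The additivity of $\lVert \nabla f \rVert_M$ is the clean payoff of the coarea formula and is what makes the range-decomposition superior to a spatial one.

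The heart of the matter is the pointwise molecule estimate \eqref{r3}. For a single slab $f_n$ of height $h_n = |I_n|$ supported on the region where $f$ exceeds the lower endpoint of $I_n$, one has $\lVert f_n \rVert_\infty \leq h_n$ and $\lVert f_n \rVert_1 \leq h_n |E_{\lambda_n}|$ where $\lambda_n$ is the lower dyadic height, while the coarea formula gives $\lVert \nabla f_n \rVert_M \geq \int_{I_n} P(E_\lambda)\,d\lambda$. To bound the left side of \eqref{r3}, which after substitution is controlled by $h_n^{1/d} \cdot (h_n |E_{\lambda_n}|)^{(d-1)/d} = h_n |E_{\lambda_n}|^{(d-1)/d}$, I would invoke the isoperimetric inequality in the sharp $BV$ form already available as Theorem \ref{wlozenie}: applying it to $\chi_{E_\lambda}$ yields $|E_\lambda|^{(d-1)/d} \leq C\, P(E_\lambda)$. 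Integrating this over $I_n$ and using that $|E_\lambda|$ is nonincreasing in $\lambda$ so that $|E_{\lambda_n}|$ is comparable to its average over $I_n$, one recovers $h_n |E_{\lambda_n}|^{(d-1)/d} \leq C' \int_{I_n} P(E_\lambda) \, d\lambda \leq C' \lVert \nabla f_n \rVert_M$, which is \eqref{r3} with $\alpha$ depending only on $d$.

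The step I expect to be the genuine obstacle is not any single estimate but the careful bookkeeping that makes \eqref{r2} hold as exact equalities rather than mere inequalities: one must choose the slab decomposition so that the super-level sets nest perfectly and the coarea formula applies on each piece without boundary interaction between adjacent slabs, and one must handle the possibility that $|E_\lambda| = \infty$ for small $\lambda$ (ruled out here by compact support and $f \in L^1$) and the dyadic endpoint where $\lambda \to 0$. A secondary technical point is justifying that each slab $f_n$ is itself a genuine $BV$ function with $\lVert \nabla f_n \rVert_M$ given by the coarea integral over $I_n$; this requires that truncations of $BV$ functions are again $BV$ with total variation controlled by the coarea formula, a standard but essential fact. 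Once the measure-theoretic bookkeeping is pinned down, the remaining estimates are direct applications of the isoperimetric inequality and Hölder/interpolation on each molecule.
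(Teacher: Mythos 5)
Your overall architecture --- reduce to $f\ge 0$, slice $f$ in the range, use the layer-cake and coarea formulas to get \eqref{r1} and the exact additivity \eqref{r2}, and close \eqref{r3} with the isoperimetric inequality --- is the same as the paper's. The genuine gap is in the choice of slicing heights and in the step you use to justify \eqref{r3}. With \emph{fixed dyadic} heights, your claim that ``$|E_{\lambda_n}|$ is comparable to its average over $I_n$'' because $|E_\lambda|$ is nonincreasing is backwards: monotonicity only gives that $|E_{\lambda_n}|$ \emph{dominates} the average, which is the wrong direction. Concretely, in $d=2$ take $f=(1+\epsilon)\chi_{B(0,R)}+(1-\epsilon)\chi_{B(0,\rho)}$ with $\pi R^2=M$ and $\pi\rho^2=m\ll\epsilon^2 M$, and look at the slab over the dyadic band $[1,2]$: the super-level sets have area $M$ for $\lambda\in[1,1+\epsilon)$ and area $m$ for $\lambda\in[1+\epsilon,2)$ (a thin spike on a wide, short plateau). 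Then $\lVert f_n\rVert_\infty=1$ and $\lVert f_n\rVert_1\approx\epsilon M$, so the left side of \eqref{r3} is about $\sqrt{\epsilon M}$, while $\lVert\nabla f_n\rVert_M=\int_1^2 P(E_\lambda)\,d\lambda\approx 2\sqrt{\pi}\,(\epsilon\sqrt{M}+\sqrt{m})\lesssim\epsilon\sqrt{M}$; the ratio blows up like $\epsilon^{-1/2}$, so no constant $\alpha$ works. H\"older's inequality shows the obstruction is structural: $\int_{I_n}|E_\lambda|^{(d-1)/d}\,d\lambda\le h_n^{1/d}\bigl(\int_{I_n}|E_\lambda|\,d\lambda\bigr)^{(d-1)/d}$, so the isoperimetric lower bound for $\lVert\nabla f_n\rVert_M$ sits \emph{below} the quantity you need it to dominate, with equality only when $|E_\lambda|$ is essentially constant on the band.

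The fix --- and this is exactly what the paper does --- is to choose the cut levels \emph{adaptively} rather than dyadically: set $a_0=0$, $A_n=\{f>a_n\}$ and $a_{n+1}=\inf\{t>a_n:\ |\{f>t\}|\le\tfrac12|A_n|\}$. Then $|\{f>t\}|>\tfrac12|A_n|$ for every $t<a_{n+1}$, so the super-level-set measure is comparable to $|A_n|$ throughout the band; integrating the isoperimetric inequality over the lower half of $[a_n,a_{n+1}]$ gives $\lVert\nabla f_n\rVert_M\gtrsim(a_{n+1}-a_n)|A_n|^{(d-1)/d}\gtrsim\lVert f_n\rVert_\infty^{1/d}\lVert f_n\rVert_1^{(d-1)/d}$, since $\lVert f_n\rVert_\infty=a_{n+1}-a_n$ and $\lVert f_n\rVert_1\le(a_{n+1}-a_n)|A_n|$. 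Everything else in your outline (exact additivity via coarea and Tonelli, the reduction to $f\ge0$ via disjointly supported positive and negative parts, the use of Theorem \ref{wlozenie} as the isoperimetric inequality for $\chi_{E_\lambda}$) matches the paper and is fine once the slicing is made adaptive.
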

\begin{proof}
At first, let us assume that $f(x)\geq 0$ for every $x\in\mathbb{R}^d$. We will define inductively a sequence of positive real numbers $(a_n)_{n=0}^\infty$ and a family $(A_n)_{n=0}^\infty$ of subsets of the space $\mathbb{R}^d$. 
Put
$$a_0=0$$
and 
$$A_0=\{x\in\mathbb{R}^d: f(x)>a_0\}.$$
Notice that $|A_0|$, the measure of the set $A_0$, is finite because $f$ has compact support. Now assume that we have already defined elements $a_n$ and $A_n$. Then put
$$a_{n+1}=\inf\left\{t>a_n: |\{f(x)>t\}|\leq\frac{1}{2}|A_n|\right\}$$
and 
$$A_{n+1}=\{x\in\mathbb{R}^d: f(x)>a_{n+1}\}.$$
Without loss of generality we can assume that $|A_n|>0$ for every $n=0, 1, ...\, .$

Now we define the functions $f_n$, for $n=0, 1, ...\, .$ Put
\begin{equation}
 f_n(x) = \left.
  \begin{cases}
    0, & \text{for } f(x)\leq a_n \\
    f(x)-a_n, & \text{for } a_n < f(x) \leq a_{n+1} \\
    a_{n+1}-a_n, & \text{for } a_{n+1} < f(x)
  \end{cases}
  \right. ,
\end{equation}
for $n=0, 1, ...$ .
Notice that $\left\{x\in\mathbb{R}^d: f_n>0\right\}=A_n$.
Then for $n=0, 1, ...$ we have
\begin{eqnarray*}
\lVert\nabla f_n\rVert_M
=
\int_0^\infty\lVert\nabla\chi_{\{f_n(x)>t\}}\rVert_M\, dt 
=
\int_0^{a_{n+1}-a_n}\lVert\nabla\chi_{\{f_n(x)>t\}}\rVert_M\, dt
\end{eqnarray*}
by coarea formula for $BV$ functions \cite[Theorem 5.4.4, Chapter 5, page 231]{Zie}.
Now we will use isoperimetric inequality \cite[Theorem 5.4.3, Chapter 5, page 230]{Zie} to get
\begin{eqnarray*}
\int_0^{a_{n+1}-a_n}\lVert\nabla\chi_{\{f_n(x)>t\}}\rVert_M\, dt
&\geq&
\frac{1}{C}\int_0^{a_{n+1}-a_n}|\{f_n(x)>t\}|^\frac{d-1}{d}\,dt \\
&=&
\frac{1}{C}\int_{a_n}^{a_{n+1}}|\{f(x)>t\}|^\frac{d-1}{d}\,dt \\
&\geq&
\frac{1}{C} \frac{a_{n+1}-a_n}{2} \left|\left\{f(x)>\frac{a_{n+1}+a_n}{2}\right\}\right|^\frac{d-1}{d} \\
&\geq&
\frac{1}{2C} \lVert f_n\rVert_\infty \left(\frac{1}{2}|\{f(x)>a_n\}|\right)^\frac{d-1}{d} \\
&=&
\frac{1}{2C} \lVert f_n\rVert_\infty \left(\frac{1}{2}|A_n|\right)^\frac{d-1}{d} \\
&\geq&
\frac{1}{2^{2-\frac{1}{d}}C} \lVert f_n\rVert_\infty^{1/d} \lVert f_n\rVert_1^\frac{d-1}{d}. 
\end{eqnarray*}
Putting the above calculations together, we get
\begin{equation}
\lVert f_n\rVert_\infty^{1/d} \lVert f_n\rVert_1^\frac{d-1}{d} \leq 2^{2-\frac{1}{d}}C\lVert\nabla f_n\rVert_M,
\end{equation}
for $n=0, 1, ...$ . This proves \eqref{r3}. Properties \eqref{r1} and \eqref{r2} are true because
\begin{eqnarray*}
\lVert f\rVert_1
&=&
\int_0^{\lVert f\rVert_\infty} |\{f(x)>t\}|\, dt \\
&=&
\sum_{n=0}^\infty \int_{a_n}^{a_{n+1}} |\{f(x)>t\}|\, dt \\
&=&
\sum_{n=0}^\infty \int_0^{a_{n+1}-a_n} |\{f(x)>t+a_n\}|\, dt \\
&=&
\sum_{n=0}^\infty \int_0^{a_{n+1}-a_n} |\{f_n(x)>t\}|\, dt \\
&=&
\sum_{n=0}^\infty \lVert f_n\rVert_1
\end{eqnarray*}
and, by coarea formula for $BV$ functions, we have
\begin{eqnarray*}
\lVert \nabla f\rVert_M
&=&
\int_0^{\lVert f\rVert_\infty} \lVert\nabla\chi_{\{f(x)>t\}}\rVert_M\, dt \\
&=&
\sum_{n=0}^\infty \int_{a_n}^{a_{n+1}} \lVert\nabla\chi_{\{f(x)>t\}}\rVert_M\, dt \\
&=&
\sum_{n=0}^\infty \int_0^{a_{n+1}-a_n} \lVert\nabla\chi_{\{f(x)>t+a_n\}}\rVert_M\, dt \\
&=&
\sum_{n=0}^\infty \int_0^{a_{n+1}-a_n} \lVert\nabla\chi_{\{f_n(x)>t\}}\rVert_M\, dt \\
&=&
\sum_{n=0}^\infty \lVert \nabla f_n\rVert_M.
\end{eqnarray*}
Now assume that $f$ is a $BV(\mathbb{R}^d)$ function with real values. Then we can write $f$ as a sum of two non-negative functions:
$$f=f^+-f^-,$$
where
\begin{equation*}
 f^+(x) = \left.
  \begin{cases}
   0, & \text{for } f(x)\leq 0 \\
    f(x) & \text{for } f(x)>0
  \end{cases}
  \right. 
\end{equation*}
and
$$f^-(x)=f^+(x)-f(x),$$
for $x\in\mathbb{R}^d$. Notice that $\{f^+(x)>0\}\cap\{f^-(x)>0\}=\emptyset$. For both functions $f^+$ and $f^-$ we can get a molecular decomposition as in the first part of the proof:
$$f^+=\sum_{n=0}^\infty f_n^+ \mbox{ and } f^-=\sum_{n=0}^\infty f_n^- \mbox{ for a. e. } x\in\mathbb{R}^d.$$
Then the sequence
$$\{f_0^+, -f_0^-, f_1^+, -f_1^-, ... \}$$
is the desired molecular decomposition.
\end{proof}

\section{Proof of Theorem \ref{Kolyada_zwarte} (the case of a compact domain)}

In this section we will prove a generalization of embedding BV spaces into Besov spaces in the case when all considered functions are equal $0$ outside a fixed, compact subset of $\mathbb{R}^d$. Thus, in this section we will consider $BV(\Omega)$ and $B_{\Phi,1}^\Psi(\Omega)$, for some fixed, compact set $\Omega\subseteq\mathbb{R}^d$. Actually, we will prove Theorem \ref{Kolyada_zwarte}.

Before we present the proof of this theorem we need the following fact about the integral modulus of continuity (for definition see \eqref{modulus}).
\begin{lemma}\label{omega1}
Let $d=1, 2, ...$ . For $f\in BV(\mathbb{R}^d)$ and $t>0$ we have
$$\omega_1(f,t)\leq t\lVert\nabla f\rVert_M.$$
\end{lemma}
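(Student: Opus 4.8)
The plan is to reduce the statement to the single-displacement inequality $\lVert f(\cdot+h)-f(\cdot)\rVert_1 \leq |h|\,\lVert\nabla f\rVert_M$ for each fixed $h$ with $|h|\leq t$. Once this is established, taking the supremum over $|h|\leq t$ in the definition \eqref{modulus} of $\omega_1$ and bounding $|h|\leq t$ gives $\omega_1(f,t)\leq t\,\lVert\nabla f\rVert_M$ immediately.

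First I would prove the displacement inequality for smooth functions, where the weak gradient is an honest $L^1$ function. If $g\in C_0^\infty(\mathbb{R}^d)$ (or, more generally, $g\in W^{1,1}(\mathbb{R}^d)$), then the fundamental theorem of calculus along the segment joining $x$ and $x+h$ yields $g(x+h)-g(x)=\int_0^1 \tilde{\nabla}g(x+sh)\cdot h\,ds$. Integrating in $x$, applying the triangle inequality together with Tonelli's theorem, and using the translation invariance of Lebesgue measure, one obtains
$$\lVert g(\cdot+h)-g(\cdot)\rVert_1 \leq \int_0^1\int_{\mathbb{R}^d}|\tilde{\nabla}g(x+sh)|\,|h|\,dx\,ds = |h|\,\lVert\tilde{\nabla}g\rVert_1 = |h|\,\lVert\nabla g\rVert_M.$$

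To pass to an arbitrary $f\in BV(\mathbb{R}^d)$ I would mollify rather than invoke density in the $BV$ norm (which, as noted in the introduction, fails). Let $\rho_\varepsilon$ be a standard approximate identity and set $f_\varepsilon=f*\rho_\varepsilon$. Then $f_\varepsilon$ is smooth, $f_\varepsilon\to f$ in $L^1$, and, writing $\tilde{\nabla}f_\varepsilon=(\nabla f)*\rho_\varepsilon$, one has $\lVert\tilde{\nabla}f_\varepsilon\rVert_1\leq\lVert\nabla f\rVert_M\,\lVert\rho_\varepsilon\rVert_1=\lVert\nabla f\rVert_M$. Applying the smooth inequality to $f_\varepsilon$ gives $\lVert f_\varepsilon(\cdot+h)-f_\varepsilon(\cdot)\rVert_1\leq |h|\,\lVert\nabla f\rVert_M$ uniformly in $\varepsilon$. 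Since translation is continuous on $L^1$ and $f_\varepsilon\to f$ in $L^1$, the left-hand side converges to $\lVert f(\cdot+h)-f(\cdot)\rVert_1$ as $\varepsilon\to 0$, and the desired bound follows.

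I expect the only delicate point to be the mollification estimate $\lVert\tilde{\nabla}f_\varepsilon\rVert_1\leq\lVert\nabla f\rVert_M$: since the weak gradient of $f$ is merely a finite vector-valued measure, one must justify that its convolution with $\rho_\varepsilon$ is represented by the $L^1$ function $\tilde{\nabla}f_\varepsilon$ and that its $L^1$ norm is dominated by the total variation of $\nabla f$ (an application of Young's convolution inequality for a measure against an $L^1$ kernel). This is precisely where the measure-theoretic nature of $\nabla f$ intervenes and is exactly what forbids differentiating $f$ directly along the segment; everything else in the argument is routine.
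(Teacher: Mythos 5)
Your proposal is correct, but it takes a genuinely different route from the paper's. The paper proves the displacement inequality by duality: it writes $\lVert f(\cdot+h)-f(\cdot)\rVert_1$ via the Riesz representation theorem as a supremum of $\int f(x)[g(x-h)-g(x)]\,dx$ over $g\in C_0$ with $\lVert g\rVert_\infty\leq 1$, telescopes the increment $g(x-h)-g(x)$ along the $d$ coordinate directions, rewrites each coordinate increment as a partial derivative of an integral of $g$ along a segment, and then pairs the resulting $C_0$ vector field (of sup norm at most $|h|$) directly against the measure $\nabla f$. This avoids any approximation of $f$ and works with the measure-valued gradient from the start. Your argument instead establishes the inequality for smooth functions by the fundamental theorem of calculus and transfers it to $BV$ by mollification; the step you single out as delicate is indeed the crux, and it is standard: applying the definition of the weak gradient to the test function $y\mapsto\rho_\varepsilon(x-y)$ gives $\tilde{\nabla}(f*\rho_\varepsilon)=(\nabla f)*\rho_\varepsilon$ pointwise, and Tonelli then yields $\lVert\tilde{\nabla}(f*\rho_\varepsilon)\rVert_1\leq\lVert\nabla f\rVert_M$. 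You also correctly sidestep the non-density of smooth functions in the $BV$ norm, since your limit passage only requires $f_\varepsilon\to f$ in $L^1$ together with the uniform gradient bound. The trade-off is that your route leans on (standard but external) facts about convolving measures with approximate identities, whereas the paper's duality computation is self-contained and stays entirely within the definition of $\lVert\nabla f\rVert_M$; on the other hand, your smooth-case estimate is arguably more transparent than the coordinate-wise telescoping.
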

\begin{proof}
We will use the following notation: $(x,h)_i=(x_1-h_1, ..., x_i-h_i, x_{i+1}, ..., x_d),$
for $x$ and $h$ in $\mathbb{R}^d$ and $i\in\{1, 2, ..., d\}$;
$[s,x]_i=(x_1, ..., x_{i-1}, s, x_{i+1}, ..., x_d),$
for $s\in\mathbb{R}$, $x\in\mathbb{R}^d$ and $i\in\{1, 2, ..., d\}$.
By Riesz representation theorem we have
\begin{eqnarray*}
\lVert f(\cdot+h)-f(\cdot)\rVert_1
&=&
\left\lVert\left(f(\cdot+h)-f(\cdot)\right)dx\right\rVert_M \\
&=&
\sup_{\substack{g\in C_0 \\ \lVert g\rVert_\infty\leq 1}}
\int_{\mathbb{R}^d}\left[f(x+h)-f(x)\right]g(x)\,dx \\
&=&
\sup_{\substack{g\in C_0 \\ \lVert g\rVert_\infty\leq 1}}
\int_{\mathbb{R}^d}f(x)\left[g(x-h)-g(x)\right]\,dx \\
&=&
\sup_{\substack{g\in C_0 \\ \lVert g\rVert_\infty\leq 1}}
\int_{\mathbb{R}^d}f(x)\left\{\sum_{k=1}^d
\left[g((x,h)_k)-g((x,h)_{k-1})\right]
\right\}\,dx \\
&=&
\sup_{\substack{g\in C_0 \\ \lVert g\rVert_\infty\leq 1}}
\int_{\mathbb{R}^d}f(x)\left\{
\sum_{k=1}^d\frac{\partial}{\partial x_k}
\int_0^{h_k}
g\left([s,(x,h)_{k-1}]_k\right)
\,ds
\right\}\,dx \\
&=&
\sup_{\substack{g\in C_0 \\ \lVert g\rVert_\infty\leq 1}}
\int_{\mathbb{R}^d}\left(
\int_0^{h_k}
g\left([s,(x,h)_{k-1}]_k\right)
\,ds
\right)_{k=1}^d
\cdot\,d(\nabla f)(x) \\
&\leq&
|h|\lVert\nabla f\rVert_M,
\end{eqnarray*}
where by 
$\left(\int_0^{h_k}g\left([s,(x,h)_{k-1}]_k\right)\,ds\right)_{k=1}^d$
we denote the $d$-dimensional vector.
Taking the supremum of both sides over $|h|<t$ we get
$$\omega_1(f,t)\leq t\lVert\nabla f\rVert_M.$$
\end{proof}

One more ingredient for the proof of Theorem \ref{Kolyada_zwarte} is needed, a geometrical lemma. We will use it to prove necessity of our condition. In this section we will denote by $\lambda_d$ the $d$-dimensional Lebesgue measure.
\begin{lemma}
\label{geometrical}
Let $\mathcal{B}_d(0,r)$ be a closed $d$-dimensional ball centred at zero and of radius $r>0$. Moreover let $V_d$ denote the volume of $\mathcal{B}_d(0,1)$,
$$V_d=\lambda_d\big(\mathcal{B}_d(0,1)\big)$$
and let $\alpha$ be a real number such that $0\leq\alpha<r$. Then
$$\lambda_d\Big(\big(\mathcal{B}_d(0,r)\cup\mathcal{B}_d(x,r)\big)\setminus\big(\mathcal{B}_d(0,r)\cap\mathcal{B}_d(x,r)\big)\Big)\geq V_dr^{d-1}\alpha,$$
where $x$ is a point of $\mathbb{R}^d$ such that $|x|=2\alpha$.
\end{lemma}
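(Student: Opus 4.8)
The plan is to reduce the problem to a one-dimensional integral by slicing both balls with hyperplanes perpendicular to the segment joining their centres, exploiting the fact that these slices are \emph{concentric} lower-dimensional balls. First, since Lebesgue measure is rotation invariant and $|x|=2\alpha$, I may rotate about the origin so as to assume $x=2\alpha e_1$, where $e_1$ is the first coordinate vector; write $A=\mathcal{B}_d(0,r)$ and $B=\mathcal{B}_d(2\alpha e_1,r)=A+2\alpha e_1$, so that the set in the statement is the symmetric difference $A\triangle B=(A\cup B)\setminus(A\cap B)$ (boundaries are Lebesgue-null, so open versus closed balls is irrelevant). For a fixed value $t$ of the first coordinate, the slice $\{y\in A:y_1=t\}$ is a $(d-1)$-ball of radius $\sqrt{(r^2-t^2)_+}$ centred at the origin of the orthogonal coordinates, while the slice of $B$ is the $(d-1)$-ball of radius $\sqrt{(r^2-(t-2\alpha)^2)_+}$ centred at the \emph{same} point. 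Being concentric, one of these balls contains the other, so the slice of $A\triangle B$ is an annulus whose $(d-1)$-measure equals $|g(t)-g(t-2\alpha)|$, where $g(t):=V_{d-1}(r^2-t^2)_+^{(d-1)/2}$ is the $(d-1)$-measure of the slice of $A$ (here $V_{d-1}$ is the volume of the unit ball in $\mathbb{R}^{d-1}$, with $V_0=1$).

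By Fubini's theorem this gives $\lambda_d(A\triangle B)=\int_{\mathbb{R}}|g(t)-g(t-2\alpha)|\,dt$, and the next step is to evaluate this integral exactly. Since $g$ is even and $g(t)\ge g(t-2\alpha)$ precisely when $t\le\alpha$, I would split the integral at $t=\alpha$ and apply the substitution $s=t-2\alpha$ in the two terms involving $g(t-2\alpha)$; the four resulting pieces telescope to the clean identity $\int_{\mathbb{R}}|g(t)-g(t-2\alpha)|\,dt=2\int_{-\alpha}^{\alpha}g(t)\,dt$. Thus $\lambda_d(A\triangle B)=2\int_{-\alpha}^{\alpha}g(t)\,dt$; the hypothesis $\alpha<r$ is used only to guarantee that $[-\alpha,\alpha]$ lies inside the support $[-r,r]$ of $g$.

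It remains to bound $\int_{-\alpha}^{\alpha}g$ from below. The function $g$ is even and non-increasing in $|t|$ on $[0,r]$, so the symmetric average $\frac{1}{2\beta}\int_{-\beta}^{\beta}g$ is non-increasing in $\beta$; hence for $0\le\alpha\le r$ one has $\frac{1}{2\alpha}\int_{-\alpha}^{\alpha}g\ge\frac{1}{2r}\int_{-r}^{r}g$. Since $\int_{-r}^{r}g(t)\,dt=\lambda_d(A)=V_d r^d$, this yields $\int_{-\alpha}^{\alpha}g\ge\frac{\alpha}{r}\int_{-r}^{r}g=\frac{\alpha}{r}V_d r^{d}=V_d r^{d-1}\alpha$, and therefore $\lambda_d(A\triangle B)\ge 2V_d r^{d-1}\alpha\ge V_d r^{d-1}\alpha$, which is in fact stronger than the stated inequality.

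The conceptual crux is the first paragraph: recognizing that the two slices are concentric, so that the slice of the symmetric difference is an annulus with the transparent measure $|g(t)-g(t-2\alpha)|$. Once this structure is in hand, the telescoping evaluation and the monotone-average estimate are routine. I expect the averaging step to be the only genuinely necessary subtlety in the final bound, since a cruder containment argument (for instance, fitting a cylinder inside $A\triangle B$) produces a lower bound that degenerates as $\alpha\to r$ and so cannot recover the factor $V_d r^{d-1}\alpha$ uniformly.
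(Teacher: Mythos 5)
The paper gives no proof of this lemma (it is explicitly ``left for the Reader''), so there is nothing to compare against; your argument is correct and complete. The slicing into concentric $(d-1)$-dimensional balls, the telescoping identity $\lambda_d\big((A\cup B)\setminus(A\cap B)\big)=2\int_{-\alpha}^{\alpha}g(t)\,dt$, and the monotone-average bound $\frac{1}{2\alpha}\int_{-\alpha}^{\alpha}g\ge\frac{1}{2r}\int_{-r}^{r}g=\frac{1}{2}V_dr^{d-1}$ all check out, and they in fact deliver the stronger lower bound $2V_dr^{d-1}\alpha$.
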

We leave the proof of Lemma \ref{geometrical} for the Reader.
Now comes the time for the proof of the main theorem of this section.

\begin{proof}[Proof of Theorem \ref{Kolyada_zwarte}]
At first we will estimate the value 
$$\int_0^\infty\Psi(t)\omega_\Phi(f,t)\,\frac{dt}{t},$$
for $f\in BV(\Omega)$.
In order to do that, let us use the molecular decomposition. Let
$$f=\sum_{m=1}^\infty f_m,$$
as in point \eqref{r1} of Theorem \ref{molecular}. We have
\begin{eqnarray*}
\int_0^\infty\Psi(t)\omega_\Phi(f,t)\,\frac{dt}{t}
=
\int_0^\infty\Psi(1/t)\omega_\Phi(f,1/t)\,\frac{dt}{t}
&=&
\int_0^\infty\Psi(1/t)\omega_\Phi\Big(\sum_{m=1}^\infty f_m,1/t\Big)\,\frac{dt}{t}\\
&\leq&
\sum_{m=1}^\infty\int_0^\infty\Psi(1/t)\omega_\Phi(f_m,1/t\Big)\,\frac{dt}{t}.
\end{eqnarray*}
Now we will estimate each summand separately. So let $f_m$ be a molecule.

At the beginning we make a simple observation. The set
$$\Big\{\lambda>0: \int_{\mathbb{R}^d}\Phi\Big(\frac{|f_m(x+h)-f_m(x)|}{\lambda}\Big)\,dx\leq 1\Big\}$$ 
contains the set
$$\Big\{\lambda>0: \int_{\mathbb{R}^d}\frac{|f_m(x+h)-f_m(x)|}{2||f_m||_{L_\infty}}\Phi\Big(\frac{2||f_m||_{L_\infty}}{\lambda}\Big)\,dx\leq 1\Big\},$$
because the inequality $\Phi(\alpha a)\leq\alpha\Phi(a)$ is true for all $a>0$, $\alpha\in[0,1]$ and a convex function $\Phi$ such that $\Phi(0)=0$.
Computing the infima of the above sets we get
\begin{eqnarray}\label{infima}
||f_m(\cdot+h)-f_m(\cdot)||_{L_\Phi}\leq\frac{2||f_m||_{L_\infty}}{\Phi^{-1}\Big(\frac{2||f_m||_{L_\infty}}{||f_m(\cdot+h)-f_m(\cdot)||_{L_1}}\Big)}.
\end{eqnarray}

Now, if we take supremum over $|h|<1/t$ we get
$$\omega_\Phi(f_m,1/t)\leq\frac{2||f_m||_{L_\infty}}{\Phi^{-1}\Big(\frac{2||f_m||_{L_\infty}}{\omega_{L_1}(f_m,1/t)}\Big)}.$$
By Lemma \ref{omega1} we have
$$\omega_{L_1}(f_m,1/t)\leq \frac{||\nabla f_m||_M}{t},$$
therefore
\begin{eqnarray}
\label{big}
\omega_\Phi(f_m,1/t)
&\leq&
\frac{2||f_m||_{L_\infty}}{\Phi^{-1}\Big(\frac{2t||f_m||_{L_\infty}}{||\nabla f_m||_M}\Big)}.
\end{eqnarray}
We will use this estimate for "big" values of $t$. 

For "small" values of $t$ we need other estimate. Let us assume for the moment that the dimension $d$ is at least $2$. By \eqref{infima} and the inequality $||f_m(\cdot+h)-f_m(\cdot)||_{L_1}\leq 2||f_m||_{L_1}$ we get
$$||f_m(\cdot+h)-f_m(\cdot)||_{L_\Phi}\leq\frac{2||f_m||_{L_\infty}}{\Phi^{-1}\Big(\frac{||f_m||_{L_\infty}}{||f_m||_{L_1}}\Big)}.$$

By \eqref{r3} we get
$$\frac{||f_m||_{L_\infty}}{||f_m||_{L_1}}=\frac{||f_m||_{L_\infty}^{d/(d-1)}}{||f_m||_{L_1}||f_m||_{L_\infty}^{1/(d-1)}}\geq\frac{1}{\alpha}\Big(\frac{||f_m||_{L_\infty}}{||\nabla f_m||_M}\Big)^{d/(d-1)},$$
for absolute constant $\alpha\geq 1$. Hence
\begin{eqnarray}
\label{small}
||f_m(\cdot+h)-f_m(\cdot)||_{L_\Phi}
\leq
\frac{2||f_m||_{L_\infty}}{\Phi^{-1}\Big(\frac{1}{\alpha}\Big(\frac{||f_m||_{L_\infty}}{||\nabla f_m||_M}\Big)^{d/(d-1)}\Big)}\leq
2^\frac{d}{d-1}\alpha\frac{2||f_m||_{L_\infty}}{\Phi^{-1}\Big(\Big(\frac{2||f_m||_{L_\infty}}{||\nabla f_m||_M}\Big)^{d/(d-1)}\Big)}.
\end{eqnarray}

Now we define
$$s_m=\Big(\frac{2||f_m||_{L_\infty}}{||\nabla f_m||_M}\Big)^{1/(d-1)}$$
and using this value $s_m$ we divide the set of integration into two parts: 
\begin{eqnarray*}
\int_0^\infty\Psi(1/t)\omega_\Phi(f_m,1/t\Big)\,\frac{dt}{t}
&=&
\int_0^{s_m}\Psi(1/t)\omega_\Phi(f_m,1/t\Big)\,\frac{dt}{t}
+
\int_{s_m}^\infty\Psi(t)\omega_\Phi(f_m,1/t\Big)\,\frac{dt}{t} \\
&=&
I+II.
\end{eqnarray*}
For both integrals $I$ and $II$ we will need different estimates of the integrands.
For integral $I$ we get by \eqref{small}
\begin{eqnarray*}
I
&=&
\int_0^{s_m}\Psi(1/t)\omega_\Phi(f_m,1/t\Big)\,\frac{dt}{t} \\
&\leq&
2^\frac{d}{d-1}\alpha\int_0^{s_m}\Psi(1/t)\frac{2||f_m||_{L_\infty}}{\Phi^{-1}\Big(\Big(\frac{2||f_m||_{L_\infty}}{||\nabla f_m||_M}\Big)^{d/(d-1)}\Big)}\,\frac{dt}{t} \\
&=&
2^\frac{d}{d-1}\alpha\int_0^{s_m}\Psi(1/t)\frac{s_m^{d-1}}{\Phi^{-1}\Big(s_m^d\Big)}\,\frac{dt}{t} ||\nabla f_m||_M.
\end{eqnarray*}

For integral $II$, by \eqref{big}, we have
\begin{eqnarray*}
II
&=&
\int_{s_m}^\infty\Psi(1/t)\omega_\Phi(f_m,1/t\Big)\,\frac{dt}{t} \\
&\leq&
\int_{s_m}^\infty\Psi(1/t)\frac{2||f_m||_{L_\infty}}{\Phi^{-1}\Big(\frac{2t||f_m||_{L_\infty}}{||\nabla f_m||_M}\Big)}\,\frac{dt}{t} \\
&=&
\int_{s_m}^\infty\Psi(1/t)\frac{s_m^{d-1}}{\Phi^{-1}\left(ts_m^{d-1}\right)}\,\frac{dt}{t}||\nabla f_m||_M.
\end{eqnarray*}

We can sum up the above estimates, and using estimate \eqref{Kolyada_nier_zw} write
\begin{eqnarray*}
\int_0^\infty\Psi(t)\omega_\Phi(f,t)\,\frac{dt}{t}
&=&
\int_0^\infty\Psi(1/t)\omega_\Phi(f,1/t)\,\frac{dt}{t} \\
&\leq&
\sum_{m=1}^\infty\int_0^\infty\Psi(1/t)\omega_\Phi(f_m,1/t\Big)\,\frac{dt}{t} \\
&=&
\sum_{m=1}^\infty\left[
\int_0^{s_m}\Psi(1/t)\omega_\Phi(f_m,1/t\Big)\,\frac{dt}{t}
+
\int_{s_m}^\infty\Psi(1/t)\omega_\Phi(f_m,1/t\Big)\,\frac{dt}{t}
\right] \\
&\leq&
\sum_{m=1}^\infty\left[
2^\frac{d}{d-1}\alpha\int_0^{s_m}\frac{\Psi(1/t)s_m^{d-1}}{\Phi^{-1}\Big(s_m^d\Big)}\,\frac{dt}{t}
+
\int_{s_m}^\infty\frac{\Psi(1/t)s_m^{d-1}}{\Phi^{-1}\left(ts_m^{d-1}\right)}\,\frac{dt}{t}
\right] ||\nabla f_m||_M \\
&\leq&
2^\frac{d}{d-1}\alpha D
\sum_{m=1}^\infty||\nabla f_m||_M  \\
&=&
2^\frac{d}{d-1}\alpha D||\nabla f||_M 
\end{eqnarray*}

In dimension $d=1$ we know from Theorem \ref{molecular}, point \eqref{r3} that there exists a constant $\alpha\geq 1$ such that for every $m=1, 2, ...$ we have
$$||f_m||_\infty\leq \alpha||\nabla f_m||_M.$$
Thus, by \eqref{small} and above inequality we have
\begin{eqnarray*}
\int_0^\infty\Psi(t)\omega_\Phi(f_m,t)\,\frac{dt}{t}
&\leq&
\int_0^\infty\Psi(t)\frac{2||f_m||_{L_\infty}}{\Phi^{-1}\Big(\frac{2||f_m||_{L_\infty}}{t||\nabla f_m||_M}\Big)}\,\frac{dt}{t} \\
&\leq&
\int_0^\infty\Psi(t)\frac{2||f_m||_{L_\infty}}{\Phi^{-1}\Big(\frac{2||f_m||_{L_\infty}}{2\alpha t||\nabla f_m||_M}\Big)}\,\frac{dt}{t} \\
&\leq&
\int_0^\infty\Psi(t)\frac{2||f_m||_{L_\infty}}{\Phi^{-1}(1/t)\frac{||f_m||_{L_\infty}}{\alpha||\nabla f_m||_M}}\,\frac{dt}{t} \\
&=&
2\alpha\int_0^\infty\Psi(t)\frac{1}{\Phi^{-1}(1/t)}\,\frac{dt}{t} ||\nabla f_m||_M \\
&=&
2\alpha\left[\int_0^1\Psi(t)\frac{1}{\Phi^{-1}(1/t)}\,\frac{dt}{t}
+
\int_1^\infty\Psi(t)\frac{1}{\Phi^{-1}(1/t)}\,\frac{dt}{t}
\right] ||\nabla f_m||_M \\
&\leq&
2\alpha\left[\frac{1}{\Phi^{-1}(1)}\int_0^1\Psi(t)\,\frac{dt}{t}
+
\int_1^\infty\Psi(t)\frac{1}{\Phi^{-1}(1/t)}\,\frac{dt}{t}
\right] ||\nabla f_m||_M \\
&\leq&
2\alpha D||\nabla f_m||_M,
\end{eqnarray*}
by \eqref{Kolyada_nier_zw} for $s=1$. 

So we get 
\begin{equation}
\label{estimate1}
\int_0^\infty\Psi(t)\omega_\Phi(f_m,t)\,\frac{dt}{t}\leq 4\alpha D||\nabla f_m||_M,
\end{equation}
for every $BV(\Omega)$ function $f$.

To complete this part of the proof, we only need to show the existence of a positive constant $A=A(\Phi,d)$ such that
\begin{equation}
\label{estimate2}
\lVert f\rVert_\Phi
\leq
A\left(\lVert f\rVert_1 + \lVert\nabla f\rVert_M\right).
\end{equation}
In order to prove that, notice that there exist $N>0$ such that for all $x\geq N$ we have
$$\Phi(x)\leq x^\frac{d}{d-1}.$$
This is the consequence of $\lim_{x\rightarrow\infty}\frac{\Phi(x)}{x^{d/(d-1)}}=0$.

Thus, by Minkowski inequality, inequality $\Phi(\alpha t)\leq\alpha\Phi(t)$, for $0\leq\alpha\leq1$, $t\geq 0$, the above estimate, and Theorem \ref{wlozenie} we have
\begin{eqnarray*}
\lVert f\rVert_\Phi
&=&
\inf\left\{\lambda>0: \int_{\mathbb{R}^d}\Phi\left(\frac{|f(x)|}{\lambda}\right)\,dx\leq 1\right\} \\
&\leq&
\inf\left\{\lambda>0: \int_{\mathbb{R}^d\cap\left\{x :\frac{|f(x)|}{\lambda}\leq1\right\}}\Phi\left(\frac{|f(x)|}{\lambda}\right)\,dx\leq 1\right\} \\
&+&
\inf\left\{\lambda>0: \int_{\mathbb{R}^d\cap\left\{x :\frac{|f(x)|}{\lambda}>1\right\}}\Phi\left(\frac{|f(x)|}{\lambda}\right)\,dx\leq 1\right\} \\
&\leq&
\Phi(1)\inf\left\{\lambda>0: \int_{\mathbb{R}^d\cap\left\{x :\frac{|f(x)|}{\lambda}\leq1\right\}} \frac{|f(x)|}{\lambda}\,dx\leq 1\right\} \\
&+&
N\inf\left\{\lambda>0: \int_{\mathbb{R}^d\cap\left\{x :\frac{|f(x)|}{\lambda}>1\right\}} \left(\frac{|f(x)|}{\lambda}\right)^{d/(d-1)}\,dx\leq 1\right\} \\
&\leq&
\Phi(1)\lVert f\rVert_1 + N\lVert f\rVert_{d/(d-1)} \\
&\leq&
\max\left\{\Phi(1),N\right\}
C\left(\lVert f\rVert_1 + \lVert\nabla f\rVert_M\right).
\end{eqnarray*}
This ends the proof of \eqref{estimate2}.

As a consequence of the above calculations, in particular \eqref{estimate1} and \eqref{estimate2}, we have
\begin{eqnarray*}
\lVert f\rVert_{B_{\Phi,1}^\Psi}
&\leq&
\lVert f\rVert_\Phi + 
\int_0^\infty\Psi(t)\omega_\Phi(f,t)\,\frac{dt}{t} \\
&\leq&
\lVert f\rVert_\Phi + 
\sum_{m=1}^\infty\int_0^\infty\Psi(t)\omega_\Phi(f_m,t)\,\frac{dt}{t} \\
&\leq&
A\left(\lVert f\rVert_1 + \lVert\nabla f\rVert_M\right)
+
2CD\sum_{m=1}^\infty||\nabla f_m||_M \\
&=&
A\left(\lVert f\rVert_1 + \lVert\nabla f\rVert_M\right)
+
2CD||\nabla f||_M \\
&\leq&
\mathcal{C}\left(\lVert f\rVert_1 + \lVert\nabla f\rVert_M\right),
\end{eqnarray*}
by \eqref{r2}. This ends the proof of the sufficiency of condition \eqref{Kolyada_nier_zw}.

The necessity of condition \eqref{Kolyada_nier_zw} will be proved by contradiction. Let us assume that there exists a constant $C>0$ such that 
\begin{eqnarray}\label{eq7}
||f||_{B_{\Phi,1}^\Psi}\leq C||f||_{BV}
\end{eqnarray}
for every $f\in BV(\Omega)$ and that for every $D>0$ there exist $s\geq 0$ such that 
$$\frac{s^{d-1}}{\Phi^{-1}(s^d)}\int_0^s\frac{\Psi(1/t)}{t}dt + \int_s^\infty\frac{\Psi(1/t)s^{d-1}}{\Phi^{-1}(ts^{d-1})t} dt > D.$$
Define a function $f\in BV(\Omega)$ by putting $f(x)=1$ for $x\in\mathcal{B}_d(0,r)$ and $f(x)=0$ for $x\not\in\mathcal{B}_d(0,r)$, where $\mathcal{B}_d(0,r)$ means the $d$-dimensional ball centered at $0$ and of radius $r>0$.

Now we can compute appropriate norms. We have
\begin{eqnarray*}
||f||_{BV}
&=&
||f||_{L_1}+||\nabla f||_M \\
&=&
\lambda_d\left(\mathcal{B}_d(0,r)\right)+\lambda_{d-1}\left(\partial \mathcal{B}_d(0,r)\right) \\
&=&
V_dr^d+dV_dr^{d-1} \\
&\leq&
V_d\left[r^d+dr^{d-1}\right] \\
&\leq&
\left(\mbox{diam}(\Omega) + d\right)V_dr^{d-1},
\end{eqnarray*}
where $\mbox{diam}(\Omega)$ means the diameter of the set $\Omega$. By Lemma \ref{geometrical} ($\chi_A$ denotes the characteristic function of the set $A$) we get
\begin{eqnarray*}
\int_0^\infty\Psi(t)\omega_\Phi(f,t)\,\frac{dt}{t}
&=&
\int_0^\infty\Psi(1/t)\omega_\Phi(\chi_{\mathcal{B}_d(0,r)},1/t)\,\frac{dt}{t} \\
&=&
\int_0^\frac{1}{2r} \Psi\left(\frac{1}{t}\right)\omega_\Phi\left(\chi_{\mathcal{B}_d(0,r)},\frac{1}{t}\right)\,\frac{dt}{t}
+
\int_\frac{1}{2r}^\infty\Psi\left(\frac{1}{t}\right)\omega_\Phi\left(\chi_{\mathcal{B}_d(0,r)},\frac{1}{t}\right)\,\frac{dt}{t} \\
&\geq&
\int_0^\frac{1}{2r} \Psi\left(\frac{1}{t}\right)\frac{1}{\Phi^{-1}\left(\frac{1}{2V_d r^d}\right)}\,\frac{dt}{t}
+
\int_\frac{1}{2r}^\infty\Psi\left(\frac{1}{t}\right)\frac{1}{\Phi^{-1}\Big(\frac{t}{V_d r^{d-1}}\Big)}\,\frac{dt}{t} \\
&\geq&
\int_0^\frac{1}{2r} \Psi\left(\frac{1}{t}\right)\frac{1}{\Phi^{-1}\left(\frac{2^d}{V_d (2r)^d}\right)}\,\frac{dt}{t}
+
\int_\frac{1}{2r}^\infty \Psi\left(\frac{1}{t}\right)\frac{1}{\Phi^{-1}\Big(\frac{2^d  t}{V_d (2r)^{d-1}}\Big)}\,\frac{dt}{t} \\
&\geq&
\int_0^\frac{1}{2r} \Psi\left(\frac{1}{t}\right)\frac{1}{\frac{2^d}{V_d}\Phi^{-1}\left(\frac{1}{(2r)^d}\right)}\,\frac{dt}{t}
+
\int_\frac{1}{2r}^\infty \Psi\left(\frac{1}{t}\right)\frac{1}{\frac{2^d}{V_d}\Phi^{-1}\left(\frac{t}{(2r)^{d-1}}\right)}\,\frac{dt}{t} \\
&=&
\frac{V_dr^{d-1}}{2}
\left\{\frac{\frac{1}{(2r)^{d-1}}}{\Phi^{-1}\left(\frac{1}{(2r)^d}\right)}\int_0^\frac{1}{2r}\frac{\Psi(1/t)}{t}\,dt
+
\int_\frac{1}{2r}^\infty\frac{\Psi(1/t)\frac{1}{(2r)^{d-1}}}{t\Phi^{-1}\Big(\frac{t}{(2r)^{d-1}}\Big)}\,dt\right\} \\
&\geq&
\frac{1}{2\left(\mbox{diam}(\Omega) + d\right)}D||f||_{BV}.
\end{eqnarray*}
In the above computations we used concavity of the function $\Phi^{-1}$. This and $\Phi^{-1}(0)=0$ allow us to write $\Phi^{-1}(\alpha x)\leq\alpha\Phi^{-1}(x)$ for $x\geq 0$ and $\alpha\geq 1$. Notice that 
$$\frac{2^d}{V_d}\geq 1.$$

Now if we select $r$ such that $D>4\left(\mbox{diam}(\Omega) + d\right)C$, then
\begin{eqnarray*}
\lVert f\rVert_{B_{\Phi,1}^\Psi}
\geq
||f||_{L_\Phi}+\int_0^\infty\frac{\Psi(t)}{t}\omega_\Phi(f,t)\,dt 
\geq
2C\lVert f\rVert_{BV}
\end{eqnarray*}
which contradicts \eqref{eq7}.
\end{proof}

\begin{remark}\label{uwaga3}
Notice that condition \eqref{Kolyada_nier_zw} of Theorem \ref{Kolyada_zwarte} implies the existence of a constant $\mathcal{C}>0$, which depends only on $\Phi$ and $d$, such that
$$\lVert f\rVert_{B_{\Phi,1}^\Psi}\leq \mathcal{C}\lVert f\rVert_{BV},$$
for every $f\in BV(\Omega)$.
\end{remark}

\section{Proof of Theorem \ref{Kolyada}}
Let $f\in BV(\mathbb{R}^d)$. At first, let us assume that $f(x)\geq 0$ for every $x\in\mathbb{R}^d$. Consider a sequence of functions $(f_m)_{m=1}^\infty$ given by
\begin{equation}\label{pomocniczy}
 f_m(x) = \left.
  \begin{cases}
    f(x)-\frac{1}{m}, & \text{for } f(x)\geq\frac{1}{m} \\
    0, & \text{for } f(x) < \frac{1}{m}
  \end{cases}
  \right. ,
\end{equation}
for $m=1, 2, ...$ . Because $f\in L^1$, thus we can assume that functions $f_m$ have compact support, $m=1, 2, ...$ . By Theorem \ref{Kolyada_zwarte}, there exists a positive constant $\mathcal{C}$ such that 
\begin{equation}\label{g1}
\lVert f_m\rVert_{B_{\Phi,1}^\Psi}
\leq
\mathcal{C}\lVert f_m\rVert_{BV},
\end{equation}
for $m=1, 2, ...$ . By coarea formula we have
\begin{equation}\label{g2}
\begin{aligned}
\lVert\nabla f_m\rVert_M
&=
\int_0^\infty\lVert\nabla\chi_{\{f_m(x)>t\}}\rVert_M\, dt \\
&=
\int_0^\infty\lVert\nabla\chi_{\{f(x)-\frac{1}{m}>t\}}\rVert_M\, dt \\
&=
\int_\frac{1}{m}^\infty\lVert\nabla\chi_{\{f(x)>t\}}\rVert_M\, dt \\
&\leq
\int_0^\infty\lVert\nabla\chi_{\{f(x)>t\}}\rVert_M\, dt \\
&=
\lVert\nabla f\rVert_M.
\end{aligned}
\end{equation}
Also 
\begin{equation}\label{g3}
\lVert f_m\rVert_1
\leq
\lVert f\rVert_1.
\end{equation}
By \eqref{g1},\eqref{g2} and \eqref{g3} we have
$$
\lVert f_m\rVert_{B_{\Phi,1}^\Psi}
\leq
\mathcal{C}\lVert f_m\rVert_{BV}
\leq
\mathcal{C}\lVert f\rVert_{BV}.
$$
Moreover, by Lebesgue's monotone convergence theorem, we have
\begin{eqnarray*}
\lVert f_m\rVert_{B_{\Phi,1}^\Psi}
&=&
\lVert f_m\rVert_\Phi+\int_0^\infty \Psi(t)\omega_\Phi(f_m,t)\frac{dt}{t}
\xrightarrow[m\rightarrow\infty]{}
\lVert f\rVert_\Phi+\int_0^\infty \Psi(t)\omega_\Phi(f,t)\frac{dt}{t} \\
&=&
\lVert f\rVert_{B_{\Phi,1}^\Psi}.
\end{eqnarray*}
Thus, by the above calculations we have
$$
\lVert f\rVert_{B_{\Phi,1}^\Psi}
\leq
\mathcal{C}\lVert f\rVert_{BV}.
$$

If function $f\in BV(\mathbb{R}^d)$ has not only positive values, then we can write it as a difference of two positive functions $f^+$ and $f^-$ with disjoint supports. For these functions, by a previous part of the proof, we have the estimates
$$
\lVert f^+\rVert_{B_{\Phi,1}^\Psi}
\leq
\mathcal{C}\lVert f^+\rVert_{BV}
$$
and
$$
\lVert f^-\rVert_{B_{\Phi,1}^\Psi}
\leq
\mathcal{C}\lVert f^-\rVert_{BV}.
$$
So we can write
$$
\lVert f\rVert_{B_{\Phi,1}^\Psi}
\leq
\lVert f^+\rVert_{B_{\Phi,1}^\Psi}+\lVert f^-\rVert_{B_{\Phi,1}^\Psi}
\leq
\mathcal{C}\left[ \lVert f^+\rVert_{BV} + \lVert f^-\rVert_{BV} \right]
\leq
2\mathcal{C}\lVert f\rVert_{BV}.
$$
This ends the proof.

\section{A concrete example (of functions $\Phi$ and $\Psi$)}
In this section we will construct functions $\Phi$ and $\Psi$ which satisfy condition \eqref{Kolyada_nier_zw} of Theorem \ref{Kolyada_zwarte} for $d=2$. Let
$$\Phi^{-1}(t)=
\begin{cases} 
t\cdot e^{\alpha\frac{\ln(1/\sqrt{t})}{\ln\ln(1/\sqrt{t})}}
&\text{ for } t\in[0,1/r), \\
pt+q
&\text{ for } t\in[1/r,r), \\
t\cdot e^{-\alpha\frac{\ln\sqrt{t}}{\ln\ln\sqrt{t}}}
&\text{ for } t\geq r,
\end{cases}$$
and we put
\begin{eqnarray}
\label{def_psi}
\Psi(t)=\frac{t}{\Phi^{-1}(t^2)}
\end{eqnarray}
for $t\geq 0$.

We define 
$$r=e^{2e^2}$$
and we want from $\alpha$ to be smaller than $e^{-2}$. Then
$$\alpha\frac{\ln r}{\ln\ln r}\leq 1.$$
Moreover we choose $p$ and $q$ in such a way that the function $\Phi^{-1}$ is continuous. So, it is not hard to see that 
\begin{eqnarray*}
p=\frac{re^{-\alpha\frac{\ln(\sqrt{r})}{\ln\ln(\sqrt{r})}}-\frac{1}{r}e^{\alpha\frac{\ln(\sqrt{r})}{\ln\ln(\sqrt{r})}}}{r-1/r}
\mbox{ and }
q=re^{-\alpha\frac{\ln(\sqrt{r})}{\ln\ln(\sqrt{r})}}-pr.
\end{eqnarray*}


Although we define the function $\Phi^{-1}$ for all positive values, we will only consider sufficiently big arguments of this function. The reason is that Theorem \ref{Kolyada_zwarte} deals with the case of a compact domain. This means that one only needs to look at big values of functions to check if the function belongs to the underlying proper Orlicz space. We do not need to be bathered by small values of functions and their rate of convergence at infinity. 

We can summarise the above by saying that from our compact point of view the big values of Young functions are important.

Now, keeping in mind assumption \eqref{def_psi}, we will check the integral condition \eqref{Kolyada_nier_zw} from Theorem \ref{Kolyada_zwarte}, that is we will show the existence of the constant $D>0$ such that for every $s>0$ we have
$$\frac{s}{\Phi^{-1}(s^2)}\int_r^s\frac{1}{t^2\Phi^{-1}\left(\frac{1}{t^2}\right)}\,dt +\int_s^\infty\frac{s}{t^2\Phi^{-1}(ts)\Phi^{-1}\left(\frac{1}{t^2}\right)}\,dt<D.$$
In the compact case the left integral above is over some interval separated from zero. We choose $r$ as a lower integral limit arbitrarily. It can be chosen to be any other strictly positive number. 

Because $r\leq s$, using assumption \eqref{def_psi}, we get for the first integral
\begin{eqnarray*}
\frac{s}{\Phi^{-1}(s^2)}\int_r^s\frac{1}{t^2\Phi^{-1}\left(\frac{1}{t^2}\right)}\, dt
&=&
\frac{s}{s^2e^{-\frac{\alpha\ln{s}}{\ln{\ln{s}}}}}\int_r^s\frac{1}{e^\frac{\alpha\ln{t}}{\ln{\ln{t}}}}\,dt \\
&\leq&
\frac{s}{s^2e^{-\frac{\alpha\ln{s}}{\ln{\ln{s}}}}}\int_r^s\frac{1}{e^\frac{\alpha\ln{t}}{\ln{\ln{s}}}}\,dt \\
&=&
\frac{s}{s^2e^{-\ln{s^\beta}}}\int_r^s\frac{1}{e^{\ln{t^\beta}}}\,dt \\
&=&
s^{\beta-1} \int_r^s t^{-\beta}\,dt \\
&\leq&
\frac{1}{1-\beta} \\
&<&
2.
\end{eqnarray*}
Above we put $\beta=\frac{\alpha}{\ln\ln{s}}$ and used in the last line an appropriate estimate.

For the second integral we will use the substitutions $\ln t=x$ and $\ln r=k$. We then get
\begin{eqnarray*}
\int_s^\infty\frac{s}{t^2\Phi^{-1}(ts)\Phi^{-1}\left(\frac{1}{t^2}\right)}\,dt
&\leq&
\int_r^\infty\frac{s}{t^2\Phi^{-1}(ts)\Phi^{-1}\left(\frac{1}{t^2}\right)}\,dt \\
&=&
\int_r^\infty\frac{1}{t}e^{\alpha\big(\frac{\ln\sqrt{st}}{\ln\ln\sqrt{st}}-\frac{\ln t}{\ln\ln t}\big)}\,dt \\ 
&\leq&
\int_k^\infty e^{\alpha\big(\frac{1/2(x+k)}{\ln(x+k)-\ln 2}-\frac{x}{\ln x}\big)}\,dx \\
&\leq&
\int_k^\infty e^{\alpha\cdot\frac{x\ln x+k\ln x-2x\ln x-2x\ln 2}{2(\ln x-\ln 2)\ln x}}\,dx \\
&=&
\int_k^\infty e^{\alpha\cdot\frac{(k-x)\ln x-2x\ln 2}{2(\ln x-\ln 2)\ln x}}\,dx \\
&\leq&
\int_k^\infty e^{\alpha\cdot\frac{-2x\ln 2}{2(\ln x-\ln 2)\ln x}}\,dx \\
&<&
\infty.
\end{eqnarray*}

\end{document}